\newcommand{\R}{\mathbb{R}}
\newcommand{\PP}{\mathcal{P}}
\newcommand{\Q}{\mathcal{Q}}
\newtheorem{assumption}{Assumption} 
\title{ Modulating function-based fast convergent observer for  the Coupled Tanks system} 
\titlerunning{ Modulating function-based fast convergent observer for  the Coupled Tanks system} 
\author{Bahia Hadj Ali }{Laboratoire de Math\'ematiques Pures et Appliqu\'ees. Mouloud Mammeri University of Tizi-Ouzou. {\sc{Algeria}} \and}{bahia.hadjali@ummto.dz}
{https://orcid.org/0009-0005-2705-6500}{(Optional) author-specific funding acknowledgements}
\author{Ania Adil}{Computer, Electrical and Mathematical  Science \& Engineering Division (CEMSE), KAUST, Thuwal 23955-6900.{\sc{Saudi Arabia}}\and}{ania.adil@kaust.edu.sa}{https://orcid.org/0000-0002-2089-0413}{}{}
\author{Fazia Bedouhene }{Laboratoire de Math\'ematiques Pures et Appliqu\'ees. Mouloud Mammeri University of Tizi-Ouzou. {\sc{Algeria}} \and }{fazia.bedouhene@ummto.dz}{https://orcid.org/0000-0002-2664-2445}{}{}
\authorrunning{B. Hadj Ali {\it{et al}} }
\keywords{
Observer design; modulating function; estimation error;  Coupled Tanks system} 
\begin{document}
\maketitle
\begin{abstract}

In this research, we apply the observer approach introduced by Djennoune et al. \cite{djennoune2019modulating} to estimate water levels in a coupled tanks system. Central to this approach is the use of a remarkable modulating function-based transformation $T_n $, which employs a time/output-dependent coordinate transformation. This transformation converts the original system into a form where the effects of initial conditions are effectively nullified. 
The primary advantage of utilizing the $T_n $ transformation is its ability to achieve instantaneous convergence, ensuring both rapid and accurate state estimation. The observer's finite-time convergence is assured, with the estimation error remaining bounded within a finite period. Numerical simulations further validate the effectiveness of this method for the Coupled Tanks system, demonstrating the robustness of the $T_n $ transformation in practical applications.

\end{abstract}


\section{Introduction}
\label{sec1:intro}

State observers have been extensively studied in recent literature because state variables are crucial in control system theory~\cite{kalman1960new,Luenberger}. 
Traditional design approaches for asymptotic state observers in nonlinear systems utilize linear techniques or coordinate transformations to simplify the system's structure. However, these methods often fail to provide the rapid convergence needed in time-critical applications. In contrast, non-asymptotic observers offer the advantage of driving the estimation error to zero within a prescribed finite time, making them particularly suitable for systems requiring fast responses~\cite{efimov2021finite}.  A novel approach distinct from standard observers has been introduced, leveraging modulating functions to enable non-asymptotic estimation. Originally conceived for parameter identification \cite{PrR:93}, this technique has been subsequently adapted for the combined estimation of parameters and sources, as well as for fault detection \cite{GHAFFOUR, BELKHATIR,aldoghaither2015modulating, fischer2018source, asiri2020} across various linear systems.  An observer for state reconstruction of non-autonomous linear systems is designed in \cite{pin2017deadbeat,pin2020fixed}.  However, extending this method to nonlinear systems has proven challenging, largely due to the intricate mathematical complexities inherent in these systems.

In this work, we present a revised version of the result originally proposed in \cite{djennoune2019modulating}. The goal of this study is to design fast converging observers that not only ensure the estimation error converges to zero but also do so quickly and predictably.  The transformation $T_n $ enables the design of a $\kappa$-fast convergence observer, which is activated after a carefully chosen time delay. This delay is crucial to circumvent potential singularities in the transformation at the initial time $ t = t_0$.  The revision involves modifying certain hypotheses, specifically, adjusting the value of the constant $\kappa$ and updating the condition for the observer's nullity based on the modulating function-based transformation $T_n$. Additionally, we refine elements of the approach, emphasizing a more concise rewriting of the equations to simplify the proofs in \cite{djennoune2019modulating}. 

This paper is structured as follows: Section~\ref{sec_prelim} establishes the theoretical framework necessary for the observer design. Section~\ref{section:3} presents the main results, including the detailed derivation of the $\kappa$-fast convergence observer. Section~\ref{sec_sim} discusses the implementation and application of the proposed observer on water level estimation in coupled tanks system, with a focus on its performance. Finally, Section~\ref{sec_con} concludes the paper. 

         
\section{Preliminaries}\label{sec_prelim}

In this article, we focus on single-input, single-output nonlinear input-affine systems, represented by: 
\begin{equation*}
 \begin{cases}
        \dot{x}(t) &= f(x(t)) + g(x(t))u(t) \\
         y(t) &= h(x(t)),
    \end{cases}
   \end{equation*}
where $x(t) \in \mathbb{R}^n$, $u(t)\in \R$, and $y(t)\in \R$ are the state vector, the input, and  the measured output, respectively. The functions $f: \mathbb{R}^n \rightarrow \mathbb{R}^n$, $g: \mathbb{R}^n \rightarrow \mathbb{R}^n$ and $h: \mathbb{R}^n \rightarrow \mathbb{R}$ are sufficiently smooth real valued vector fields and scalar function, respectively.\\
This system, can be transformed into an observable canonical form using a suitable diffeomorphism as follows:
\begin{equation}\label{equ: z-coordinates}
     \begin{cases}
        \dot{z}(t) &= Az(t) + \Psi(z(t),u(t)) \\
         y(t) &= C(z(t)), \quad t\geq t_0;
    \end{cases}
\end{equation}
where $z(t) \in \R^n$, $u(t)\in \R$, and $y(t)\in \R$ are the state vector, the input, and  the measured output, respectively. The matrices $A$ and $C$ are given under the Brunowsky form, that is:
\begin{equation*}
    A=\begin{bmatrix}
        0 & 1& 0 & \hdots&0\\
        0 &0 & 1 & 0&\vdots\\
        \vdots & \vdots & \vdots & \ddots& \vdots\\
        0 & 0& 0 & \hdots&1\\
         0 & 0& 0 & \hdots&0\\
    \end{bmatrix}; \quad C=\begin{bmatrix}
        1 & 0 & \hdots & 0
    \end{bmatrix}; \text{ and }  \Psi(z(t),u(t))=\begin{pmatrix}
        0\\
        \vdots\\
        0\\
        f(z(t))+g(z(t))u(t)
    \end{pmatrix} 
\end{equation*}


The following assumptions are made:
\begin{assumption}\label{assump}
    \begin{enumerate}
        \item The pair $(A,C)$ is observable;
        \item There exist four positive constants $\gamma_f, \delta_f, \gamma_g, \delta_g$ such that $\forall z_1, z_2 \in \R^n$
    \begin{equation}
            ||f(z_1)-f(z_2)||^2\leq \gamma_f^2||z_1-z_2||^2+ \delta_f^2 \quad \text{ and }   \quad        
            ||g(z_1)-g(z_2)||^2\leq \gamma_g^2||z_1-z_2||^2+ \delta_g^2
    \end{equation}
     \item \label{assump4}
The input $u(t)$ is bounded by a positive constant $M_u$, i.e. $|u(t)| \leq M_u, \forall t \geq t_0$.
    \end{enumerate}
\end{assumption}


\begin{definition}[\cite{djennoune2019modulating}]
Given the nonlinear system \eqref{equ: z-coordinates}, an observer 
$\dot{\hat{z}}(t) = \mathcal{F}(\hat{z}(t), u(t), y(t))$ for \eqref{equ: z-coordinates} is said to be $\kappa$-fast convergent with a prescribed finite-time convergence $t_a$ if there exists $\kappa>0$ such that for any initial conditions $z_0$ and $\hat{z}_0$ with $z_0 \neq \hat{z}_0$, the error $e(t) = z(t) -\hat{z}(t)$ satisfies 
   $ \|e(t)\| \leq \kappa \quad \text{for all } t \geq t_a.$ 
   
If $\kappa = 0$, the observer is termed an exact-fast convergent observer.
\end{definition}
It should be noted that $\kappa$ is independent of the initial conditions.


\begin{definition}[Modulating Function, \cite{djennoune2019modulating,aldoghaither2015modulating}]\label{MF}
Let $m\in\mathbb{N^*}$, and $\mu$ be a function satisfying the following properties:
\begin{align}
   \left\{\begin{array}{llll}
   &\mu\in\mathcal{C}^{m-1}([t_0,\infty[);\\
   &\mu^{(j)}(t_0)= 0,\,\forall j=0,1,\ldots,m-1;\\
   &\mu^{(j)}(t) \neq 0 \quad\text{for }\quad t > t_0, \forall j = 0, 1, \dots, m-1;\\
   &\sup_{t\geq t_0}|\mu^{(j)}(t)| \leq M_j
    \end{array}\right.
\end{align}
   Then, $\mu$ is called the $m^{th}$ order modulating function on $[t_0,\infty[$. 
\end{definition}

     
\section{Design of the modulating function based fast convergent observer}
\label{section:3}
In this section, we present the theoretical extension of the time-output transformation approach by Djennoune et al. \cite{djennoune2019modulating} to the nonlinear system \eqref{equ: z-coordinates}. This extension constructs a modulating function-based observable that modulates initial conditions to zero. Additionally, we review and refine some elements of the approach, focusing on a compact rewriting of the equations, which simplifies the proofs in \cite{djennoune2019modulating}. Furthermore, the assumption of initial conditions for the constructed observer has been removed.

        
\subsection{Magnificent Modulating Function-Based Transformation Applied to the Nonlinear System}

Given a $n$-order modulating function  $\mu$  and a sequence $(\alpha_{ji})$ defined by \begin{equation*}
\left\{ 
\begin{array}{l}
\alpha_{j,0} = 1,\ j=1,\dots,n; \\ 
\alpha_{n,i} = (-1)^{i},\ i=0,\dots,n-1; \\ 
\alpha_{j,i} = \alpha_{j+1,i} - \alpha_{j,i-1},\ j=1,\dots,n-1;\ i=1,\dots,j-1.
\end{array}
\right.
\end{equation*}
Let \begin{equation*}
T_{n}(\mu (t)) = 
\begin{bmatrix}
\alpha_{1,0}\mu (t) & 0 & 0 & \dots  & \dots  & 0 \\ 
\alpha_{2,1}\mu^{(1)}(t) & \alpha_{2,0}\mu (t) & 0 & \dots  & \dots  & 0 \\ 
\alpha_{3,2}\mu^{(2)}(t) & \alpha_{3,1}\mu^{(1)}(t) & \alpha_{3,0}\mu (t) & \dots  & \dots  & 0 \\ 
\vdots  & \vdots  & \vdots  & \ddots  & \vdots  & \vdots  \\ 
\alpha_{n-1,n-2}\mu^{(n-2)}(t) & \alpha_{n-1,n-3}\mu^{(n-3)}(t) & \dots & \dots  & \alpha_{n-1,0}\mu (t) & 0 \\ 
(-1)^{n-1}\mu^{(n-1)}(t) & (-1)^{n-2}\mu^{(n-2)}(t) & \dots  & \dots  & -\mu^{(1)}(t) & \alpha_{n,0}\mu (t)
\end{bmatrix}
\end{equation*}

From these definitions, we can deduce additional properties of the sequence $\alpha_{j,i}$, in particular:
\begin{eqnarray*}
\alpha_{n-1,i} &=& (-1)^{i}(i+1);\ i=0,\dots,n-2 \\
\alpha_{n-2,i} &=& (-1)^{i}\dfrac{(i+1)(i+2)}{2};\ i=0,\dots,n-3
\end{eqnarray*}

The transition from $T_{n}(\mu(t))$ to $T_{n+1}(\mu(t))$ can be easily achieved by the following steps:
\begin{eqnarray*}
\alpha_{j,i} &=& \alpha_{j-1,i}, \quad j=1,\dots,n-1,\ i=1,\dots,j-1, \\
\alpha_{j,j-1} &=& \alpha_{j+1,j-1} - \alpha_{j,j-2}, \quad j=1,\dots,n.
\end{eqnarray*}
This implies that the only elements that need to be calculated in $T_{n+1}$ are those in the first column, while the remaining elements are the same as in $T_n$.

Clearly, $T_{n}(\mu(t))$ is invertible for every $t > t_0$. Moreover, if $\xi(t)$ is the image of the state $z(t)$ under this transformation, i.e., \begin{equation}\label{eq:xi}
    \xi(t) = T_n(\mu(t))z(t).
\end{equation}
The derivative of $\xi(t)$ is given by
\begin{eqnarray*}
 \dot{\xi}(t)& = &T_{n}(\mu^{(1)}(t))z(t) + T_{n}(\mu(t))\dot{z}(t)\\&=&T_{n}(\mu^{(1)}(t))T_{n}^{-1}(\mu(t))\xi(t) + T_{n}(\mu(t))(AT_{n}^{-1}(\mu(t))\xi(t)+\Psi(T_{n}^{-1}(\mu(t))\xi(t),u(t)))  \\&=&\left(T_{n}(\mu^{(1)}(t))+ T_{n}(\mu(t))A\right)T_{n}^{-1}(\mu(t))\xi(t)+T_{n}(\mu(t))\Psi(T_{n}^{-1}(\mu(t))\xi(t),u(t)))\\
 &=&G_n(\mu(t))\xi(t)+H_{n}(\mu(t),\xi(t),u(t)).
\end{eqnarray*}
In addition, we have $ \xi(t_0)=0_{n\times 1}$.
The following lemma provides an explicit expression for the matrices $G_n(\mu(t)) = \left(T_{n}(\mu^{(1)}(t)) + T_{n}(\mu(t))A\right)T_{n}^{-1}(\mu(t))$ and $H_{n}(\mu(t),\xi(t),u(t))=T_{n}(\mu(t))\Psi(T_{n}^{-1}(\mu(t))\xi(t),u(t)))$.
\begin{lemma}
For any \(t > t_0\), the following hold:
    \item \[
   G_n(\mu(t)) = \begin{bmatrix}
    \alpha_{1,0}\mu^{(1)}(t)/\mu(t) & 1 & 0 & \dots    & 0 \\ 
    \alpha_{2,1}\mu^{(2)}(t)/\mu(t) & 0 & 1 & \dots   & 0 \\  
    \vdots  & \vdots  & \vdots  & \ddots& \vdots  \\ 
    \alpha_{n,n-1}\mu^{(n-1)}(t)/\mu(t) & 0 & \dots & \dots  & 1 \\ 
    (-1)^{n}\mu^{(n)}(t)/\mu(t) & 0 & \dots  & \dots  & 0
    \end{bmatrix}; \; H_{n}(\mu(t),\xi(t),u(t)) = \begin{bmatrix}
    0 \\
    0 \\
    \vdots \\
   \tilde{\Psi}(\mu(t),f(t),g(t),u(t))
    \end{bmatrix}
    \] where $   \tilde{\Psi}(\mu(t),f(t),g(t),u(t))=\mu(t) \left(f(T_{n}^{-1}(\mu(t))\xi(t)) + g(T_{n}^{-1}(\mu(t))\xi(t))u(t)\right)$ and the coefficients $\alpha_{j,j-1}$, $j=1,\dots,n$ are those of $T_{n+1}(\mu(t))$.  
\end{lemma}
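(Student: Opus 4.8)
The plan is to handle the two blocks separately and, crucially, to avoid ever forming $T_n^{-1}(\mu(t))$ explicitly. The $H_n$ block is immediate from the sparsity of $\Psi$: since $\Psi(z,u)=(0,\dots,0,f(z)+g(z)u)^{\top}$ has only its $n$-th entry nonzero, and $T_n(\mu)$ is lower triangular with bottom-right entry $\alpha_{n,0}\mu=\mu$, the product $T_n(\mu)\Psi(T_n^{-1}\xi,u)$ is simply the last column of $T_n(\mu)$ scaled by the scalar $f(T_n^{-1}\xi)+g(T_n^{-1}\xi)u$. That last column is $(0,\dots,0,\mu)^{\top}$, which yields $H_n=(0,\dots,0,\tilde\Psi)^{\top}$ with $\tilde\Psi=\mu\,(f(T_n^{-1}\xi)+g(T_n^{-1}\xi)u)$, exactly as stated. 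I expect no difficulty here.

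For $G_n$ I would prove the equivalent identity obtained by right-multiplying the definition by $T_n(\mu)$, namely $G_n(\mu)\,T_n(\mu)=T_n(\mu^{(1)})+T_n(\mu)A$. The key observation is that the claimed $G_n$ has companion-like structure: ones on the superdiagonal, a nontrivial first column $g$, and zeros elsewhere, so that $G_n=A+g\,e_1^{\top}$, where $e_1$ is the first standard basis vector. Because $T_n(\mu)$ is lower triangular its first row equals $\mu\,e_1^{\top}$, hence $e_1^{\top}T_n(\mu)=\mu\,e_1^{\top}$ and $G_n(\mu)\,T_n(\mu)=A\,T_n(\mu)+\mu\,g\,e_1^{\top}$. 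The identity to prove therefore collapses to $T_n(\mu^{(1)})-[A,T_n(\mu)]=\mu\,g\,e_1^{\top}$, where $[A,T_n(\mu)]=A\,T_n(\mu)-T_n(\mu)\,A$; that is, the difference of $T_n(\mu^{(1)})$ and this commutator must be supported on the first column alone.

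I would then verify this entrywise. Writing $[T_n(\mu)]_{j,k}=\alpha_{j,j-k}\mu^{(j-k)}$ for $k\le j$, and using that premultiplication by $A$ shifts rows up while postmultiplication by $A$ shifts columns right, one gets $[A,T_n(\mu)]_{j,k}=[T_n(\mu)]_{j+1,k}-[T_n(\mu)]_{j,k-1}$. For every column $k\ge 2$ the defining recursion $\alpha_{j,i}=\alpha_{j+1,i}-\alpha_{j,i-1}$ makes this coincide exactly with $[T_n(\mu^{(1)})]_{j,k}=\alpha_{j,j-k}\mu^{(j-k+1)}$, while the last row is pinned down separately using $\alpha_{n,i}=(-1)^{i}$. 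This simultaneously certifies the superdiagonal ones and the vanishing of every off-first-column entry of the difference. Reading off column one then yields $g$: at interior rows the two surviving commutator contributions recombine, by the $T_n\to T_{n+1}$ transition rule of the excerpt together with Pascal's rule, into the first-column coefficient of the next-order matrix $T_{n+1}(\mu)$ (the indexing asserted in the statement), and the bottom entry is read off from the $(n,1)$ corner, where the commutator contribution vanishes and only $T_n(\mu^{(1)})$ survives.

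The main obstacle I anticipate is precisely this first-column / boundary bookkeeping: keeping the row-and-column index shifts in the commutator straight, correctly recombining the two neighboring coefficients $\alpha_{j,j-1}$ and $\alpha_{j+1,j}$ into the $T_{n+1}$ coefficient, and treating the last row and the $(n,1)$ corner as separate cases (these are where the sign patterns $\alpha_{n,i}=(-1)^i$ must be used directly). Everything else—the superdiagonal entries and the entire $H_n$ block—then follows mechanically once the single identity $T_n(\mu^{(1)})-[A,T_n(\mu)]=\mu\,g\,e_1^{\top}$ has been established.
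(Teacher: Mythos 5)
Your strategy is sound, and it is worth noting at the outset that the paper itself states this lemma with \emph{no} proof at all, so there is no paper argument to compare against; your reduction is the natural one. The $H_n$ block is handled completely and correctly, and the reduction of the $G_n$ claim, via the ansatz $G_n=A+g\,e_1^{\top}$ and the observation $e_1^{\top}T_n(\mu)=\mu\,e_1^{\top}$, to the single identity $T_n(\mu^{(1)})-[A,T_n(\mu)]=\mu\,g\,e_1^{\top}$ is correct, as is the entrywise verification that every column $k\ge 2$ of this difference vanishes (recursion $\alpha_{j,i}=\alpha_{j+1,i}-\alpha_{j,i-1}$ off the last row, $\alpha_{n,i}=(-1)^i$ on it).

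The gap is precisely the step you deferred as ``first-column bookkeeping,'' and it is not merely unfinished: carried out, it refutes the displayed formula rather than proving it. For $j<n$ one gets $\mu g_j=\bigl(\alpha_{j,j-1}-\alpha_{j+1,j}\bigr)\mu^{(j)}$ (coefficients of $T_n$), and at the $(n,1)$ corner — where, as you correctly say, the commutator vanishes and only $T_n(\mu^{(1)})$ survives — one gets $\mu g_n=\alpha_{n,n-1}\mu^{(n)}=(-1)^{n-1}\mu^{(n)}$, which already contradicts the stated bottom entry $(-1)^{n}\mu^{(n)}/\mu$. Solving the recursion gives $\alpha_{j,i}=(-1)^i\binom{n-j+i}{i}$, so Pascal's rule yields $g_j=(-1)^{j-1}\binom{n}{j}\mu^{(j)}/\mu$ for every $j$; in terms of the coefficients of $T_{n+1}$ this is $-\alpha_{j+1,j}\mu^{(j)}/\mu$ (a \emph{minus} sign and a shifted row index), not ``$\alpha_{j,j-1}$ of $T_{n+1}$'' as the statement asserts and as you claim the recombination delivers. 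The case $n=2$ makes the failure concrete: direct computation gives
\begin{equation*}
G_2(\mu)=\begin{bmatrix} 2\mu^{(1)}/\mu & 1 \\ -\mu^{(2)}/\mu & 0 \end{bmatrix},
\end{equation*}
whereas the statement, read literally (with $\alpha_{1,0}=1$), puts $\mu^{(1)}/\mu$ and $+\mu^{(2)}/\mu$ in the first column. So the one obstacle you flagged is exactly where the proof cannot close as written: the lemma as printed contains coefficient and sign errors (its displayed first column is even internally inconsistent between its top rows, which follow the pattern $\alpha_{j,j-1}$, and its bottom rows, which follow $\alpha_{j+1,j}$), and the paper's own coupled-tanks example betrays the same problem, writing $+\mu^{(2)}y$ in the transformed system but $-\mu^{(2)}y$ in the observer. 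The honest conclusion of your (otherwise correct) argument is a corrected lemma with first column $g_j=(-1)^{j-1}\binom{n}{j}\mu^{(j)}/\mu$; asserting instead that the computation matches ``the indexing asserted in the statement'' is the step that would fail.
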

In the sequel, we use the following notations $\tilde{a}(\xi(t)) =  f(T_{n}^{-1}(\mu(t))\xi(t))$ and $\tilde{b}(\xi(t)) = g(T_{n}^{-1}(\mu(t))\xi(t))$, for $t>t_0$.

        
\subsection{Construction of the modulating function based $\kappa$-fast convergent observer}

The modulating function-based observer proposed in \cite{djennoune2019modulating} is originally designed as a conventional Luenberger-type observer. However, we present it here in a more compact form, incorporating the matrix \( G_n \) for clarity and efficiency. First, observe that  \begin{eqnarray}\label{eq:refor G_n}
    G_n(\mu(t))\xi(t)&=&A\xi(t)+\begin{bmatrix}
    \alpha_{1,0}\mu^{(1)}(t)&
    \alpha_{2,1}\mu^{(2)}(t)  &
    \hdots& 
    (-1)^{n}\mu^{(n)}(t)
    \end{bmatrix}^T \xi_1(t)/\mu(t)\\
    &=&A\xi(t)+B_\alpha (\mu(t)) y(t)
\end{eqnarray}
Thus, under \eqref{eq:refor G_n}, the differential equation satisfied by $\xi$ takes the following form:
\begin{equation}\label{equ:dot-xi}
  \begin{cases}
\dot{\xi}(t)=A\xi(t)+B_\alpha (\mu) y(t)+B_0\mu(t)\left (\tilde{a}(\xi)+\tilde{b}(\xi)u(t)\right) \\  \xi_1(t)=\mu(t)y(t); \;
\xi(t_0)=0.
    \end{cases}  
\end{equation}
where $B_0=\begin{bmatrix}
    0& 0&\dots&1
\end{bmatrix}^T$.
Thus, instead of using the following observer structure,
\begin{equation}\label{equ:dot-hatxi}
\dot{\hat{\xi}}=G_n(\mu(t))\hat{\xi}(t)+H_{n}(\mu(t),\hat{\xi}(t),u(t))+\mu(t)K\left( y(t)-\hat{y}(t) \right),
\end{equation}
we employ a more suitable form that leverages the fact that \(\xi_1\) is known and does not require estimation, namely:
\begin{equation}\label{equ:dot-hatxi}
  \begin{cases}
\dot{\hat \xi}(t)=A\hat \xi(t)+B_\alpha (\mu) y(t)+B_0\mu(t)\left (\tilde{a}(\hat\xi)+\tilde{b}(\hat\xi)u(t)\right) +\mu(t)K\left( y(t)-\hat{y}(t) \right)\\  \hat \xi_1(t)=\mu(t)\hat y(t); \;
\hat \xi_1(t_0)=0.
    \end{cases}  
\end{equation}
where $K^T=\begin{bmatrix}
    k_1 &k_2& \dots &k_n
\end{bmatrix}$ is the observer gain to be determined. Contrarily to \cite{djennoune2019modulating}, here $\hat{\xi}_j(t_0) $, $j=2,\dots,n$ are arbitrarily. 
The estimate \(\hat{z}(t)\) of the system's original state $z(t)$ is derived through the expression \begin{eqnarray}\label{eq:hat z}
    \hat{z}(t) = T_n^{-1}(\mu(t))\hat{\xi}(t) \quad \text{for} \quad t > t_0.
\end{eqnarray}
Since \(T(\mu(t))\) is singular at \(t = t_0\), the observer activation should be delayed to avoid instability. We can determine an activation time \(t_a\) such that, for a fixed \(\epsilon\), \({\rm det} \, T_n(\mu(t)) > \epsilon\) for all \(t \geq t_a\). For \(\mu(t) = (1 - e^{-(t - t_0)})^n\), \({\rm det} \, T_n(\mu(t)) = (1 - e^{-(t - t_0)})^{n^2}\). Given \(\epsilon > 0\) sufficiently small, \(t_a\) is computed as:
\begin{equation}\label{eq:t_a}
  t_a > t_0 -\ln\left(1 - \epsilon^{1/n^2}\right).
\end{equation}
In the following, we present a revised version of the theorem originally proposed in \cite[Theorem~1]{djennoune2019modulating}. In our revision, we have adjusted certain hypotheses, specifically, the value of the constant $\kappa$ has been modified and the condition for the nullity of the observer, based on the transformation $T_n$. 

          
\begin{theorem}
Given the system \eqref{equ: z-coordinates} under Assumption~\ref{assump} and a $n$-order modulating function $\mu$ that transforms \eqref{equ: z-coordinates}  into the observer form \eqref{equ:dot-xi}. Let the 
modulating function based observer \eqref{equ:dot-hatxi}, with gain $K$ chosen such that there exist symmetric positive matrices $P$ and $Q$ satisfy the Lyapunov equation: $(A-KC)^T\PP+\PP(A-KC)=-Q$.
Let  $t_a > t_0$ be such that $T(\mu(t))$ is invertible for $t \geq t_a$. If
\begin{equation}
        \varpi=\frac{\lambda_{min}(\Q)}{\lambda_{max}(\PP)}-M_0\lambda_{max}(\PP) \left( 1 + M_u^2 + \frac{\gamma_f^2+\gamma_g^2}{\lambda_{max}(\PP)} \right) > 0,
\end{equation}
then, there exists a constant $\kappa>0$, with\begin{equation*}
\kappa = \tau_{t_a} \frac{\lambda_{\max}(\PP)}{\lambda_{\min}(\PP)} \|\hat{\xi}(t_0)\|^2 e^{-\varpi (t_a - t_0)} + \tau_{t_a} \frac{M_0(\delta_f^2 + \delta_g^2)}{\varpi \lambda_{\min}(\PP)} \left(1 - e^{-\varpi (t_a - t_0)}\right)>0,
\end{equation*} where $\tau_{t_a}=\max_{t\geq t_a}||T^{-1}(\mu(t))||$,  such that $||e(t)||\leq \kappa, \forall t\geq t_a > t_0$. 
 Thus, the observer \eqref{eq:hat z} is $\kappa$-fast convergent to \eqref{equ: z-coordinates}.
\end{theorem}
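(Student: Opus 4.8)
The plan is to carry out a Lyapunov analysis in the transformed coordinates and then pull the resulting bound back through $T_n^{-1}$. First I would introduce the transformed error $\tilde{e}(t)=\xi(t)-\hat{\xi}(t)$ and derive its dynamics by subtracting \eqref{equ:dot-hatxi} from \eqref{equ:dot-xi}. The key simplification is that the output injection collapses nicely: since $\xi_1=\mu y$ and $\hat{\xi}_1=\mu\hat{y}$, one has $y-\hat{y}=C\tilde{e}/\mu$, so the correction term $\mu K(y-\hat{y})$ becomes exactly $KC\tilde{e}$. Using the reformulation \eqref{eq:refor G_n} this yields the linear-plus-nonlinear form $\dot{\tilde{e}}=(A-KC)\tilde{e}+B_0\,\mu(t)\bigl[(\tilde{a}(\xi)-\tilde{a}(\hat{\xi}))+(\tilde{b}(\xi)-\tilde{b}(\hat{\xi}))u(t)\bigr]$, with initial condition $\tilde{e}(t_0)=-\hat{\xi}(t_0)$ because $\xi(t_0)=0$.

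Next I would take the Lyapunov candidate $V(t)=\tilde{e}(t)^T\PP\tilde{e}(t)$ and differentiate along the error dynamics. The linear part contributes $\tilde{e}^T\bigl[(A-KC)^T\PP+\PP(A-KC)\bigr]\tilde{e}=-\tilde{e}^T\Q\tilde{e}$ by the Lyapunov equation, which I bound below by $\tfrac{\lambda_{\min}(\Q)}{\lambda_{\max}(\PP)}V$. For the cross term $2\mu(t)\,\tilde{e}^T\PP B_0\bigl[(\tilde{a}(\xi)-\tilde{a}(\hat{\xi}))+(\tilde{b}(\xi)-\tilde{b}(\hat{\xi}))u\bigr]$ I would apply Young's inequality and then invoke Assumption~\ref{assump}: the identities $\tilde{a}(\xi)-\tilde{a}(\hat{\xi})=f(z)-f(\hat{z})$ and $\tilde{b}(\xi)-\tilde{b}(\hat{\xi})=g(z)-g(\hat{z})$ let me use the quadratic growth bounds with constants $\gamma_f,\delta_f,\gamma_g,\delta_g$, the input bound $|u|\le M_u$, and $\mu(t)\le M_0$. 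Collecting terms, the goal is the scalar differential inequality $\dot{V}\le-\varpi V+M_0(\delta_f^2+\delta_g^2)$, where $\varpi$ is precisely the quantity in the hypothesis; the assumption $\varpi>0$ is exactly what makes this a genuine decay estimate.

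From here the comparison (Grönwall) lemma gives $V(t)\le V(t_0)e^{-\varpi(t-t_0)}+\frac{M_0(\delta_f^2+\delta_g^2)}{\varpi}\bigl(1-e^{-\varpi(t-t_0)}\bigr)$. I would then sandwich $V$ by its eigenvalue bounds, $\lambda_{\min}(\PP)\|\tilde{e}\|^2\le V\le \lambda_{\max}(\PP)\|\tilde{e}\|^2$, substitute $V(t_0)\le\lambda_{\max}(\PP)\|\hat{\xi}(t_0)\|^2$, and finally transfer the estimate to the original coordinates using $e(t)=T_n^{-1}(\mu(t))\tilde{e}(t)$ together with $\|e(t)\|\le\tau_{t_a}\|\tilde{e}(t)\|$ for $t\ge t_a$. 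Evaluating the decaying/saturating envelope at the activation time $t_a$ produces exactly the stated constant $\kappa$, and since $\kappa$ is built from $\varpi$, the eigenvalue ratios, $M_0$, $\tau_{t_a}$ and the growth constants, it is independent of the initial conditions as required.

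The main obstacle is the cross-term estimate, because the natural Lipschitz-type bound acts on $\|z-\hat{z}\|=\|e\|=\|T_n^{-1}(\mu(t))\tilde{e}\|$ rather than directly on $\|\tilde{e}\|$, and $\|T_n^{-1}(\mu(t))\|$ blows up as $t\to t_0$. Controlling this interaction—so that the modulating factor $\mu(t)$ multiplying the nonlinearity tames the singularity of $T_n^{-1}$ and the decay rate $\varpi$ stays positive and initial-condition-free—is the delicate part, and it is also why the bound can only be asserted for $t\ge t_a>t_0$ where $\det T_n(\mu(t))>\epsilon$. A secondary point to handle with care is the bookkeeping in Young's inequality, so that the $\lambda_{\max}(\PP)$ weights combine into the exact form appearing in $\varpi$.
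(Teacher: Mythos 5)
Your overall route is the same as the paper's: form $\tilde e=\xi-\hat\xi$, use $V=\tilde e^{T}\PP\tilde e$ to reach a scalar inequality $\dot V\le-\varpi V+M_0(\delta_f^2+\delta_g^2)$, apply Gr\"onwall and Rayleigh, then pull the bound back through $T_n^{-1}$ via $\tau_{t_a}$ and evaluate the envelope at $t_a$. Your derivation of the error dynamics (the collapse $\mu K(y-\hat y)=KC\tilde e$, the cancellation of the $B_\alpha(\mu)y$ terms, and $\tilde e(t_0)=-\hat\xi(t_0)$) is correct and matches the structure the paper relies on. The difference is that the paper never proves the differential inequality at all: it imports it verbatim as equation~(40) of the cited reference \cite{djennoune2019modulating}, and its own contribution reduces to the Gr\"onwall/Rayleigh step and the pull-back, which coincide with your last two paragraphs.

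As a standalone proof, however, your proposal has a genuine gap, and it sits exactly where you say ``the delicate part'' is: the cross-term estimate is never carried out, and the route you sketch does not close. Assumption~\ref{assump} bounds $f$ and $g$ in the $z$-coordinates, so what it actually yields is $\|\tilde a(\xi)-\tilde a(\hat\xi)\|^2=\|f(z)-f(\hat z)\|^2\le\gamma_f^2\|T_n^{-1}(\mu(t))\tilde e\|^2+\delta_f^2$. To arrive at $\dot V\le-\varpi V+M_0(\delta_f^2+\delta_g^2)$ with the $\varpi$ stated in the theorem --- a constant containing no trace of $T_n^{-1}$ --- you would need $\mu(t)\|T_n^{-1}(\mu(t))\|^2$ (or $\mu(t)^2\|T_n^{-1}(\mu(t))\|^2$, depending on how Young's inequality is weighted) to be bounded uniformly on $(t_0,\infty)$. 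It is not: it blows up as $t\to t_0^{+}$; for instance, with $n=2$ and $\mu(t)=(1-e^{-(t-t_0)})^2$ one gets $\mu\|T_2^{-1}(\mu)\|^2\sim(t-t_0)^{-4}$. Nor can you escape by starting the Lyapunov analysis at $t_a$: the Gr\"onwall bound is anchored at $V(t_0)=\hat\xi(t_0)^{T}\PP\hat\xi(t_0)$, and integrating only from $t_a$ would both make $\varpi$ depend on $\tau_{t_a}$ and leave $V(t_a)$ uncontrolled. The inequality closes only if the quadratic-growth hypothesis is imposed on $\tilde a,\tilde b$ as functions of $\xi$ (which is what equation~(40) of the reference effectively presupposes, with constants $\gamma_a,\delta_a,\gamma_b,\delta_b$); your identity $\tilde a(\xi)-\tilde a(\hat\xi)=f(z)-f(\hat z)$ is precisely the step that fails to convert Assumption~\ref{assump} into that hypothesis. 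So you located the obstacle correctly, but naming it is not overcoming it: the proof is incomplete at the one point where real analytical work is required --- a gap the paper itself hides behind its citation rather than resolves. (A smaller issue you share with the paper: writing $A_\ast=\frac{\lambda_{\max}(\PP)}{\lambda_{\min}(\PP)}\|\hat\xi(t_0)\|^2$ and $B_\ast=\frac{M_0(\delta_f^2+\delta_g^2)}{\varpi\lambda_{\min}(\PP)}$, the envelope $A_\ast e^{-\varpi(t-t_0)}+B_\ast(1-e^{-\varpi(t-t_0)})$ is majorized by its value at $t_a$ for all $t\ge t_a$ only when $A_\ast\ge B_\ast$; otherwise the correct uniform constant is $\tau_{t_a}\max(A_\ast,B_\ast)$.)
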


          
\begin{proof}
We provide here a brief outline of the proof, starting from equation (40) in the paper \cite[eq:(40)]{djennoune2019modulating}:  
$\dot{V}(\Tilde{e}(t)) \leq -\varpi \lambda_{\max}(\PP) \|\Tilde{e}(t)\|^2 + M_0(\delta_a^2 + \delta_b^2)$, where $\Tilde{e}(t)=\xi(t)- \hat \xi (t)$. 
Applying Rayleigh's Inequality and Gronwall's Inequality to this later yields:
\begin{equation*}
\|\Tilde{e}(t)\|^2 \leq \frac{\lambda_{\max}(\PP)}{\lambda_{\min}(\PP)} \|\hat{\xi}(t_0)\|^2 e^{-\varpi (t - t_0)} + \frac{M_0(\delta_f^2 + \delta_g^2)}{\varpi \lambda_{\min}(\PP)} \left(1 - e^{-\varpi (t - t_0)}\right).
\end{equation*}
for all \( t > t_0 \). For \( t > t_a \), and letting \( \tau_{t_a} = \max_{t \geq t_a} \|T^{-1}(\mu(t))\|^2 \), we obtain:
\begin{eqnarray*}
\|z(t) - \hat{z}(t)\|^2 &\leq & \|T^{-1}(\mu(t))\|^2 \|\Tilde{e}(t)\|^2 \\
&\leq & \tau_{t_a} \frac{\lambda_{\max}(\PP)}{\lambda_{\min}(\PP)} \|\hat{\xi}(t_0)\|^2 e^{-\varpi (t_a - t_0)} + \tau_{t_a} \frac{M_0(\delta_a^2 + \delta_b^2)}{\varpi \lambda_{\min}(\PP)} \left(1 - e^{-\varpi (t_a - t_0)}\right):=\kappa.
\end{eqnarray*}
This constant \(\kappa\), which depends on \(t_a\) but not on the initial conditions \(z(t_0)\) and \(\hat{z}(t_0)\), can be interpreted as a convex combination of the two constants \(\tau_{t_a} \frac{\lambda_{\max}(\PP)}{\lambda_{\min}(\PP)} \|\hat{\xi}(t_0)\|^2\) and \(\tau_{t_a} \frac{M_0(\delta_a^2 + \delta_b^2)}{\varpi \lambda_{\min}(\PP)}\), weighted by \(e^{-\varpi (t_a - t_0)}\). If we assume that \(\hat{\xi}(t_0) = 0\), we recover the result in \cite{djennoune2019modulating}.
\end{proof}

            
\section{Example and Simulation study}\label{sec_sim}
In this example, we apply the theoretical results presented in Section 3 to estimate water levels in a coupled tanks system. This type of system is commonly used in various industrial processes, such as wastewater treatment and food production. Controlling these systems is challenging due to their complex structure and the presence of unmeasurable variables.
The dynamical model of the coupled tanks, expressed in canonical form, is given by
\begin{figure}[H]
  \centering
    \begin{minipage}{0.6\textwidth}
  \begin{equation}\label{eq:sys coupled tanks}
    \begin{cases}
        \dot{z}_1&= z_2\\
        \dot{z}_2&=\varphi(z,u) \\
        y&= z_1
    \end{cases},
\end{equation}
where $$\varphi(z,u)=  - \frac{A_{o1}^2 g}{A_{t1}A_{t2}} \left( 1 + \frac{K_p V_p}{A_{t2}z_2 + A_{o2} \sqrt{2gz_1}} \right) -\frac{A_{o2}g}{A_{t2}}\left( \frac{z_2}{\sqrt{2gz_1}} \right)$$
$z_1$, $z_2$ represent the water levels in tanks 2 and 1, respectively. $V_p$ denotes the pump voltage (the input), while $y$, the water level in tank 2, serves as the output of system (\ref{eq:sys coupled tanks}).
The values of the physical parameter of system can be found in \cite{adil2021coupled}.\\
The modulating function is selected as 
\(\mu(t) = (1-e^{-t})^2\), and the transformation \(T\) takes the following form: \[
T_2(\mu(t))= \begin{bmatrix}
        \mu(t) & 0\\
        -\mu^{(1)}(t) & \mu(t) 
    \end{bmatrix}.
\]
    \end{minipage}
    \hfill
    \begin{minipage}{0.3\textwidth}
       \centering
        \includegraphics[width=\textwidth]{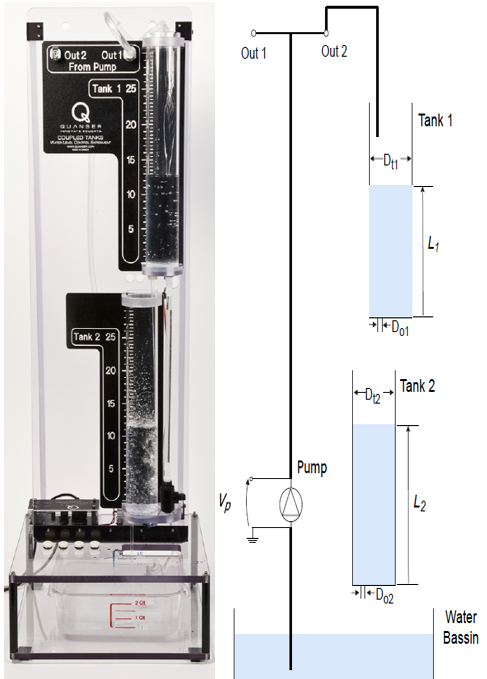}
        \caption{ Coupled Tanks
system \cite{adil2021coupled}. }
    \end{minipage}
\end{figure}
To ensure the invertibility of \(T_2(\mu(t))\), it is sufficient to choose \(t_a > 0.38\), guaranteeing \( \mathrm{det}\,T_2(\mu(t))>0.01\).
The image of system \eqref{eq:sys coupled tanks} via the transformation $T$ takes the following form:
\[
    \begin{cases}
       \dot{\xi}_1(t) = \xi_2(t) + 2\mu^{(1)}(t)y(t),\\
       \dot{\xi}_2(t) = \mu(t) \varphi(\xi(t)) + \mu^{(2)}(t)y(t); \;     \xi_1(t)=\mu(t)  y(t)
    \end{cases}
\]
The modulating function based fast convergent observers $\hat{\xi}$ and $\hat{z}$ are given respectively by:
\begin{equation*}
    \begin{cases}
        \dot{\hat{\xi}}_1(t)= \hat{\xi}_2(t) + 2\mu^{(1)}(t)y(t) + k_1\mu(t)(y(t)-\hat{y}(t)),\\
        \dot{\hat{\xi}}_2= -\mu^{(2)}(t)y(t) + k_2\mu(t)(y(t)-\hat{y}(t)), \;     \hat \xi_1(t)=\mu(t)  \hat y(t) ,
    \end{cases} \quad \hat{z}(t)=\begin{cases}
        0,& t<t_a\\
        T_2^{-1}(\mu(t))\hat{\xi}(t),& t\geq t_a
    \end{cases}
\end{equation*}
where the nonlinear function $\varphi$ is assumed to be unknown.

\subsection{Simulation study}
From the simulations, we observe that the observer converges to the original system immediately after \( t_a = 0.38013 \, \text{s} \) where $\epsilon=0.01$, without exhibiting any transient peaks. These simulations are performed using the initial conditions \(\hat{\xi}_0 = [0; 4]\), where the first component is zero by definition (see \eqref{equ:dot-xi}), \(z_0 = [4; 4]\), and the observer gain $K =[30\hspace{0.3cm} 200]^T $.

\begin{figure}[H]
  \centering
    \begin{minipage}[t]{0.45\textwidth}
        \centering
        \includegraphics[width=\textwidth]{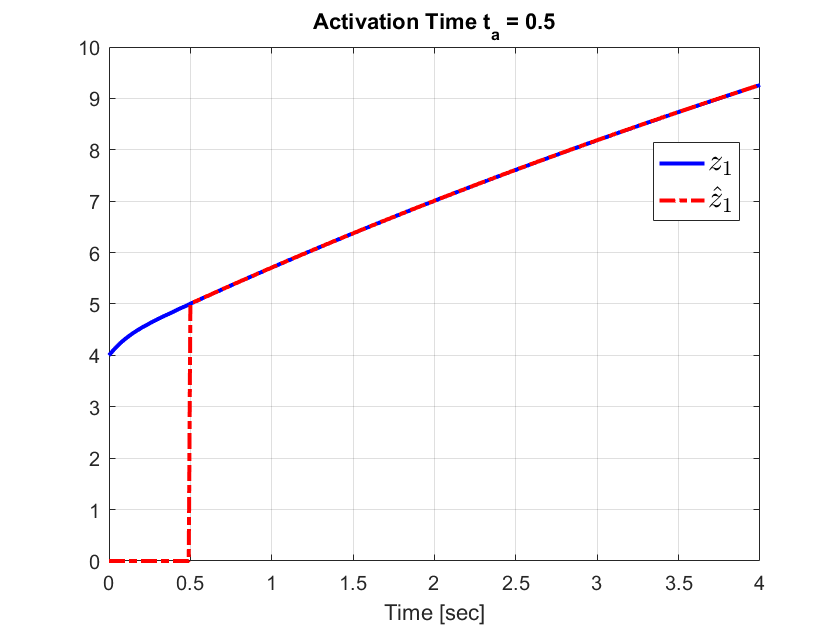}
        \caption{Behaviour of $z_1$ and its estimate $\hat{z}_1$}
    \end{minipage}
    \hfill
    \begin{minipage}[t]{0.45\textwidth}
       \centering
        \includegraphics[width=\textwidth]{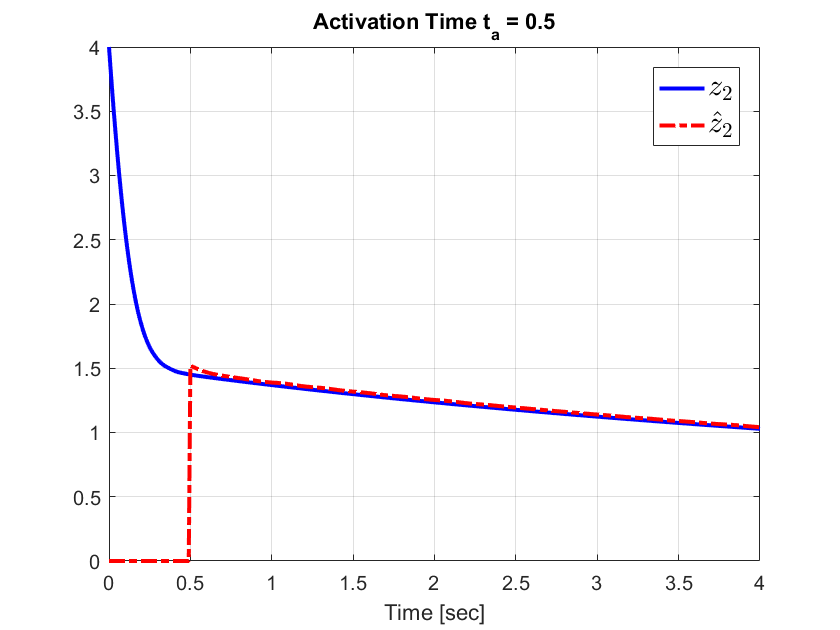}
        \caption{Behaviour of $z_2$ and its estimate $\hat{z}_2$}
    \end{minipage}
\end{figure}

         
\section{Conclusion}\label{sec_con}
In this paper, we applied the observer approach proposed by Djennoune et al. \cite{djennoune2019modulating} to non-linear single input single output systems, with a specific focus on coupled tanks. Our primary contribution lies in the compact reformulation of the equations, which simplifies the analysis of the observer. Furthermore, we demonstrate the effectiveness of this approach through its application to the coupled tank system, showcasing its ability to achieve rapid and accurate state estimation while eliminating the effects of initial conditions.



\bibliography{moad}

\end{document}